\newtheorem{thm}{Theorem}[section]
\newtheorem{cor}[thm]{Corollary}
\newtheorem{lem}[thm]{Lemma}
\newtheorem{prop}[thm]{Proposition}
\theoremstyle{definition}
\newtheorem{defn}{Definition}
\newtheorem{exm}[thm]{Example}
\newtheorem{rem}[thm]{Remark}
\newtheorem*{conj}{Conjecture}
\DeclareMathOperator{\NN}{\mathbb {N}}
\DeclareMathOperator{\ZZ}{\mathbb {Z}}
\DeclareMathOperator{\depth}{depth}
\DeclareMathOperator{\supp}{supp}
\def\H {\mathcal H}
\def\a {\mathbf a}
\def\b {\mathbf b}
\def\w {\mathbf w}
\def\m {\mathfrak m}
\begin{document}

\title []{The sequentially Cohen-Macaulay property of edge ideals of edge-weighted graphs  }

\author{Ly Thi Kieu Diem}
\address{Department of Mathematics and Informatics, Hanoi National University of Education, 136 Xuan Thuy, Hanoi, Vietnam}
\email{lykieudiem2001@gmail.com}

\textbf{\author{Nguy\^en C\^ong Minh}
\address{School of Applied Mathematics and Informatics\\ Hanoi University of Science and Technology, 1 Dai Co Viet, Hanoi, Vietnam}
\email{minh.nguyencong@hust.edu.vn}}

\author{Thanh Vu}
\address{Hanoi University of Science and Technology, 1 Dai Co Viet, Hanoi, Vietnam}
\email{vuqthanh@gmail.com}

\subjclass [2010]{05E40, 13F55, 13D02}
\keywords{sequentially Cohen-Macaulay; edge-weighted graph; monomial ideal}

\date{}

\dedicatory{Dedicated to Professor Ngo Viet Trung on the occasion of his 70th birthday}
\commby{}
\maketitle
\begin{abstract}
    Let $I(G_{\w})$ be the edge ideal of an edge-weighted graph $(G,\w)$. We prove that $I(G_\w)$ is sequentially Cohen-Macaulay for all weight functions $\w$ if and only if $G$ is a Woodroofe graph.
\end{abstract}

\maketitle

\section{Introduction}
\label{sect_intro}

Let $S = K[x_1,\ldots,x_n]$ be a standard graded polynomial ring over an arbitrary field $K$. Let $G$ be a simple graph with vertex set $V=\{x_1,\ldots,x_n\}$ and edge set $E(G)$. By abuse of notation, we also use $x_ix_j$ to denote an edge $\{x_i,x_j\}$ of $G$. Assume that $\w:E(G)\rightarrow \ZZ_{>0}$ is a weight function on edges of $G$. The edge ideal of the edge-weighted graph $(G,\w)$ is defined by
$$I(G_\w) = \big( (x_i x_j)^{\w(x_ix_j)} \mid \{i,j\}\in E(G)\big) \subseteq S.$$
In particular, if every edge of $G$ has weight one then $I(G_\w)$ becomes the usual edge ideal $I(G)$. 

Edge ideals of edge-weighted graphs were introduced by Paulsen and Sather-Wagstaff \cite{PS}. In this work, the authors described a primary decomposition of $I(G_\w)$ and studied the Cohen-Macaulay property of $I(G_\w)$ when the underlying graph $G$ is a cycle, a tree, or a complete graph. In particular, they proved that $I(G_\w)$ is Cohen-Macaulay for all weight functions $\w$ when $G$ is a complete graph. In our first main result, we prove the converse of this result.

\begin{thm}\label{thm_CM} Let $G$ be a simple graph. The following statements are equivalent: 
	\begin{enumerate}
		\item  $I(G_\w)$ is Cohen-Macaulay for all weight functions $\w$; 
		\item  $I(G_\w)$ is Cohen-Macaulay for all weight functions $\w$ such that $\w(x_ix_j)\in\{1,2\}$ for all edges $x_i x_j \in E(G)$; 
		\item  $G$ is a disjoint union of finitely many complete graphs.
	\end{enumerate}
\end{thm}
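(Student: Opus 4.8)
The plan is to prove the three implications $(1)\Rightarrow(2)\Rightarrow(3)\Rightarrow(1)$. The implication $(1)\Rightarrow(2)$ is immediate, since $(2)$ only asks for Cohen--Macaulayness over the smaller family of weight functions with values in $\{1,2\}$. For the other two implications I would first record the combinatorial reformulation of $(3)$: a finite simple graph is a disjoint union of complete graphs if and only if it contains no induced path $P_3$ on three vertices, equivalently every connected component is complete (in a connected non-complete graph, the first three vertices of a shortest path between two non-adjacent vertices form an induced $P_3$). To obtain $(3)\Rightarrow(1)$, write $G=K_{n_1}\sqcup\cdots\sqcup K_{n_r}$; the variable sets of the components are disjoint, so $S/I(G_\w)$ is the tensor product over $K$ of the rings $S_i/I((K_{n_i})_{\w})$. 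Each factor is Cohen--Macaulay for every weight function by the theorem of Paulsen and Sather-Wagstaff \cite{PS}, and a tensor product over a field of finitely generated Cohen--Macaulay algebras is again Cohen--Macaulay; hence so is $S/I(G_\w)$.

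The whole content therefore lies in $(2)\Rightarrow(3)$, which I would prove by contraposition: assuming $G$ is not a disjoint union of complete graphs, I would produce a weight function $\w$ with values in $\{1,2\}$ for which $I(G_\w)$ is not Cohen--Macaulay. Two elementary facts drive the argument. First, $\sqrt{I(G_\w)}=I(G)$ for every $\w$, so the minimal primes of $I(G_\w)$ are exactly the ideals $(x_i:i\in C)$ for $C$ a minimal vertex cover of $G$, independent of the weights. Second, a Cohen--Macaulay quotient of a polynomial ring is unmixed, so it suffices to exhibit two associated primes of $I(G_\w)$ of different heights. If $G$ is not well-covered, i.e.\ it has minimal vertex covers of two different sizes, then already the unweighted ideal $I(G)=I(G_\w)$ with $\w\equiv 1$ has minimal primes of different heights, and we are done with a weight function of the required type.

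It remains to treat the case in which $G$ is well-covered but still has an induced path $x-y-z$ with $xy,yz\in E(G)$ and $xz\notin E(G)$. Here the minimal primes all have the same height, so the failure of Cohen--Macaulayness must be forced by an embedded prime, and this is where the weights enter. I would set $\w(xy)=2$ and $\w(e)=1$ for every other edge $e$, so that the defining ideal contains the generators $x^2y^2$ and $yz$. The mechanism is already visible on the pure path: for $(x^2y^2,yz)\subseteq K[x,y,z]$ one computes $(x^2y^2,yz):x^2y=(y,z)$, an associated prime of height $2$ strictly containing the minimal prime $(y)=(x^2y^2,yz):x^2z$, so $(y,z)$ is embedded. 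Intuitively, the doubled edge $xy$ means that covering $x^2y^2$ ``through $y$'' no longer discounts the adjacent edge $yz$, which forces $z$ into the relevant component and creates the embedded prime supported on $\{y,z\}$. The plan for the general graph is to promote this local computation either by localizing $S/I(G_\w)$ at the monomial prime $P_W$ attached to a suitably chosen minimal vertex cover $W$ with $y,z\in W$ and $x\notin W$ --- localization preserves Cohen--Macaulayness and reduces the defining data near $y$ to the pattern $(y^2,yz)$ --- or by exhibiting directly a monomial $m=x^2y\cdot\prod_{v\in C}v$, with $C$ a vertex cover of the edges away from the path, for which $I(G_\w):m$ is a prime of non-minimal height.

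The main obstacle I anticipate is precisely this last step: controlling the full set of associated primes of $I(G_\w)$ in the presence of all the remaining edges of $G$, and guaranteeing that the embedded prime produced on the core $\{x,y,z\}$ genuinely survives rather than being absorbed into the other primary components. The situation looks most delicate when $z$ is a pendant vertex attached to $y$, where the convenient cover $W$ need not exist and the auxiliary monomial $m$ must be chosen with more care. Making the reduction to the local pattern $(y^2,yz)$ clean --- either through the right localization or through an explicit, weight-aware description of the irredundant irreducible decomposition of $I(G_\w)$ --- is the crux of the proof.
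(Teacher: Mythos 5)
Your implications $(1)\Rightarrow(2)$ and $(3)\Rightarrow(1)$ are fine; the tensor-product argument for $(3)\Rightarrow(1)$, using the Paulsen--Sather-Wagstaff theorem \cite{PS} for weighted complete graphs, is a valid (graded) alternative to the paper's route, which instead reduces all weights to unweighted induced subgraphs. The genuine gap is in $(2)\Rightarrow(3)$, and it is exactly the obstacle you flagged, which is fatal to your specific choice of weights: putting weight $2$ on a \emph{single} edge $xy$ of an induced $P_3$ and weight $1$ elsewhere does not work. Concretely, take $G=P_4$ with edges $x_1x_2,x_2x_3,x_3x_4$; this graph is well-covered and Cohen--Macaulay and has the induced path $x_1-x_2-x_3$. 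With $\w(x_1x_2)=2$ and all other weights $1$,
\[
I(G_\w)=(x_1^2x_2^2,\,x_2x_3,\,x_3x_4)=(x_1^2x_2^2,x_3)\cap(x_2,x_3x_4),
\]
and the exact sequence $0\to S/I(G_\w)\to S/(x_1^2x_2^2,x_3)\oplus S/(x_2,x_3x_4)\to S/(x_2,x_3)\to 0$ together with the depth lemma gives $\depth S/I(G_\w)\ge \min(2,2+1)=2=\dim S/I(G_\w)$, so this ideal \emph{is} Cohen--Macaulay. Your local witness $(x^2y^2,yz):x^2y=(y,z)$ becomes $I(G_\w):x_1^2x_2=(x_2,x_3)$, which is now a \emph{minimal} prime, since $\{x_2,x_3\}$ is a vertex cover of $P_4$: the embedded prime is absorbed, and since the ideal is genuinely Cohen--Macaulay, no localization or cleverer colon monomial can rescue this weight function. (A smaller remark: your dichotomy should be on whether $I(G)$ itself is Cohen--Macaulay rather than on well-coveredness, since well-covered non-CM graphs such as $C_4$ exist; but that leak is harmless, whereas the $P_4$ failure is not.)

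The missing idea, which is the paper's Lemma \ref{lem_induced_subgraphs}, is to put weight $2$ on \emph{both} edges of the induced $P_3$ --- in general, on all edges inside the target induced subgraph $G[W]$ --- weight $1$ on all other edges, and then colon by the squarefree monomial $u=\prod_{v\in W}v$. By Lemma \ref{radical_colon}, each weight-$2$ edge inside $W$ survives as an honest edge of $\sqrt{I(G_\w):u}$, each edge joining $W$ to its complement collapses to a single outside variable, and edges away from $W$ are untouched; hence $\sqrt{I(G_\w):u}=I(G[W])+(\text{variables not in }W)+I(G[W'])$ with $W'$ disjoint from $W$. Lemma \ref{lem_radical_transfer} (Cohen--Macaulayness of $I$ forces that of $\sqrt{I:u}$) and the splitting over disjoint variable sets then force $I(G[W])$ to be Cohen--Macaulay, and taking $W$ to be an induced $P_3$, which is not Cohen--Macaulay because it is not unmixed, completes the contraposition. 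On your $P_4$ example this prescription gives $(x_1^2x_2^2,x_2^2x_3^2,x_3x_4):x_1x_2x_3$, whose radical $(x_1x_2,x_2x_3,x_4)$ is visibly mixed. This uniform ``cut out the induced subgraph'' device is precisely the globalization step your plan lacks, and it is also what lets the same argument prove Theorem \ref{thm_sCM}.
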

In \cite{FSTY}, Fakhari, Shibata, Terai, and S. Yassemi characterized the unmixed property of $I(G_\w)$ when $G$ is a very well-covered graph and proved that this is equivalent to the Cohen-Macaulay property of $I(G_\w)$. In this context, Terai \cite{T} proposed the following conjecture

\begin{conj}[Terai] Let $G$ be a Cohen-Macaulay very well-covered graph. Then $I(G_\w)$ is sequentially Cohen-Macaulay for all weight functions $\w$. 
\end{conj}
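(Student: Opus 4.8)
The plan is to derive Terai's conjecture as a consequence of our characterization of the sequentially Cohen-Macaulay property rather than to attack the weighted ideals directly. Since $I(G_\w)$ is sequentially Cohen-Macaulay for all weight functions $\w$ precisely when $G$ is a Woodroofe graph, it suffices to prove the purely graph-theoretic statement that \emph{every Cohen-Macaulay very well-covered graph is a Woodroofe graph}. This converts an assertion about infinitely many weighted ideals into a single inclusion of combinatorial graph classes, after which no further weighted commutative algebra is required.

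First I would invoke the structural description of a Cohen-Macaulay very well-covered graph $G$: on its $2h$ vertices it admits a perfect matching $\{x_1y_1,\dots,x_hy_h\}$ that, after relabeling, can be ordered so that $\{x_i,y_i\}\in E(G)$ for all $i$, the set $\{x_1,\dots,x_h\}$ is a maximal independent set, $\{x_i,y_j\}\in E(G)$ forces $i\le j$, and the transitivity rule $\{x_i,y_j\},\{x_j,y_k\}\in E(G)\Rightarrow\{x_i,y_k\}\in E(G)$ holds. A first dividend of this ordering is that $x_h$ is a leaf: it meets no other $x_i$ because $\{x_1,\dots,x_h\}$ is independent, and it meets $y_j$ only when $h\le j$, i.e.\ only $y_h$. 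Thus $x_h$ is simplicial, which is the standard starting point for a vertex-decomposition-style argument.

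The core step is then an induction on $h$. Using the leaf $x_h$ and its neighbor $y_h$, I would take $y_h$ as a shedding vertex and analyze the two graphs produced by the recursion in the definition of a Woodroofe graph. Deleting the matched pair, $G\setminus\{x_h,y_h\}$, inherits the ordered matching $\{x_1y_1,\dots,x_{h-1}y_{h-1}\}$ together with all three conditions, so it is again Cohen-Macaulay very well-covered and the induction hypothesis applies. The other branch, $G$ minus the closed neighborhood $N[y_h]$, removes $y_h$, the leaf $x_h$, and every vertex adjacent to $y_h$; one then checks, using the ordering and transitivity, that what remains is still a Woodroofe graph and that the shedding condition for $y_h$ holds, namely that no independent set of $G\setminus N[y_h]$ is maximal in $G\setminus y_h$.

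The main obstacle is the bookkeeping in the $G\setminus N[y_h]$ branch. Removing the closed neighborhood of $y_h$ can delete some $x_i$ while leaving its partner $y_i$ behind, so the clean matching structure is not literally preserved and the induction cannot be confined to very well-covered graphs (a single deletion already makes the vertex count odd). The argument must therefore be phrased with the inductive hypothesis stated directly in terms of the Woodroofe recursion, proving that both deletion branches land in the Woodroofe class, and it is exactly here that the ordering $\{x_i,y_j\}\in E\Rightarrow i\le j$ and the transitivity condition do the real work. If this direct verification becomes unwieldy, a natural fallback is to invoke the known equivalence that a very well-covered graph is Cohen-Macaulay if and only if it is vertex decomposable, and then to establish the single implication that vertex-decomposable graphs are Woodroofe graphs, which again closes the gap through the main theorem.
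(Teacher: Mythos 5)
Your proposal sets out to \emph{prove} a statement that is in fact false, and which the paper refutes rather than proves: the content of the paper on this point is a counterexample, not a proof. Ironically, your opening reduction is exactly right and is exactly the paper's route: by Theorem \ref{thm_sCM}, Terai's conjecture is equivalent to the purely graph-theoretic inclusion ``every Cohen-Macaulay very well-covered graph is a Woodroofe graph.'' But that inclusion is false. Take $H$ to be the suspension (whisker graph) of the cycle $C_t$ with $t\ne 3,5$, say $t=4$: vertices $x_1,\ldots,x_t,y_1,\ldots,y_t$, cycle edges $x_1x_2,\ldots,x_{t-1}x_t,x_tx_1$, and pendant edges $x_iy_i$. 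Then $H$ is Cohen-Macaulay by \cite{SVV} (see also \cite{V}) and is very well-covered by construction, yet the induced subgraph on $\{x_1,\ldots,x_t\}$ is a chordless $t$-cycle, so $H$ is not Woodroofe. This is precisely Example \ref{counterexm-Terai's}: taking weight $w\ge 2$ on the cycle edges and $1$ on the whiskers, one gets
$$\sqrt{I(H_{\w}):\textstyle\prod_{i=1}^t x_i^{w-1}}=I(C_t)+(y_1,\ldots,y_t),$$
which is not sequentially Cohen-Macaulay by \cite{FT}, so $(H,\w)$ is not sequentially Cohen-Macaulay by Lemma \ref{lem_radical_transfer}. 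So no correct argument can close the gap you left open; the conjecture itself fails.

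It is worth seeing where your induction concretely breaks, because the error is structural, not bookkeeping. Being Woodroofe is a hereditary property defined by forbidden induced subgraphs (no chordless cycles of length other than $3$ or $5$); it is \emph{not} characterized by a shedding-vertex recursion. Hence even if you verify that both branches $G\setminus\{x_h,y_h\}$ and $G\setminus N[y_h]$ are Woodroofe and that $y_h$ sheds, you cannot conclude that $G$ is Woodroofe. The whiskered $C_4$ witnesses this: deleting a matched pair leaves a whiskered path, and deleting a closed neighborhood leaves a chordal graph, so both branches are Woodroofe, yet $G$ contains an induced $C_4$. Your fallback fails for the same reason: the implication in \cite{Wo} is that Woodroofe graphs are vertex decomposable, and the converse is false --- the whiskered $C_4$ is vertex decomposable (indeed, Cohen-Macaulay very well-covered graphs are vertex decomposable) but not Woodroofe. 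What your (correct) reduction via Theorem \ref{thm_sCM} actually accomplishes is to convert the conjecture into a statement that is easy to refute, which is what the paper does.
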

We first recall the definition of sequentially Cohen-Macaulay modules over $S$.
\begin{defn} Let $M$ be a graded module over $S$. We say that $M$ is
	sequentially Cohen-Macaulay if there exists a filtration
	$$0 = M_0\subset M_1 \subset \cdots \subset M_r = M$$
	of $M$ by graded $S$-modules such that $\dim (M_i/M_{i-1}) <\dim (M_{i+1}/M_i)$
	for all $i$, where $\dim$ denotes Krull dimension, and $M_i/M_{i-1}$ is Cohen-Macaulay for all $i$. An ideal $J$ is said to be sequentially Cohen-Macaulay if $S/J$ is a sequentially Cohen-Macaulay $S$-module. A graph $G$ (resp. $(G,\w)$) is said to be sequentially Cohen-Macaulay if $I(G)$ (resp. $I(G_\w)$) is. 
\end{defn}

The notion of sequentially Cohen-Macaulay was introduced by Stanley \cite{S} as a generalization of the Cohen-Macaulay property in connection with the work of Bj\"orner and Wachs on nonpure shellability \cite{BW1, BW2}. When $J$ is a sequentially Cohen-Macaulay ideal, it is well-known that $J$ is Cohen-Macaulay if and only if $J$ is unmixed.

In motivation to study the conjecture of Terai, we can classify graphs for which $G_\w$ are sequentially Cohen-Macaulay for all weight functions $\w$. To introduce our result, we first define a special class of simple graphs which contain $5$-cycles and chordal graphs. A chordless cycle $C_t$ of length $t$ is a cycle with no chord $\{i,j\}$ for $j\ne i+1$. Equivalently, the induced graph of $G$ on $\{1,\ldots, t\}$ is the cycle on $t$ vertices.

\begin{defn}  A simple graph $G$ is said to be a Woodroofe graph if $G$ has no chordless cycles of length other than $3$ or $5$. 
\end{defn}

In \cite[Theorem 1]{Wo}, Woodroofe proved that if $G$ is a Woodroofe graph, then it is vertex-decomposable. So, it is sequentially Cohen-Macaulay. Our second main result of this paper states that Woodroofe graphs are precisely graphs for which $G_\w$ are sequentially Cohen-Macaulay for all weight functions $\w$. 

\begin{thm}\label{thm_sCM} Let $G$ be a simple graph. The following statements are equivalent: 
	\begin{enumerate}
		\item  $(G,\w)$ is sequentially Cohen-Macaulay for all weight functions $\w$; 
		\item $(G,\w)$ is sequentially Cohen-Macaulay for all weight functions $\w$ such that $\w(x_ix_j)\in\{1,2\}$ for all edges $x_i x_j \in E(G)$; 
		\item  $G$ is a Woodroofe graph. 
	\end{enumerate}
\end{thm}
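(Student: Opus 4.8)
The plan is to prove Theorem~\ref{thm_sCM} by establishing the implications $(1)\Rightarrow(2)\Rightarrow(3)\Rightarrow(1)$. The implication $(1)\Rightarrow(2)$ is trivial, since statement $(2)$ only restricts to the subclass of weight functions taking values in $\{1,2\}$. For $(3)\Rightarrow(1)$, I would argue that if $G$ is a Woodroofe graph, then $(G,\w)$ is sequentially Cohen-Macaulay for \emph{every} weight function $\w$. The key tool here is polarization: the polarization of $I(G_\w)$ is the edge ideal of a new simple graph $G^{\w}$ obtained by subdividing and blowing up each weighted edge of $G$. Since sequential Cohen-Macaulayness is preserved under polarization, it suffices to show that $G^{\w}$ is sequentially Cohen-Macaulay. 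I would prove that the combinatorial operation producing $G^{\w}$ from a Woodroofe graph $G$ introduces no new chordless cycles of forbidden length (lengths other than $3$ or $5$), so that $G^{\w}$ is again a Woodroofe graph; then Woodroofe's theorem \cite{Wo} gives vertex-decomposability and hence sequential Cohen-Macaulayness.

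The heart of the argument is the contrapositive of $(2)\Rightarrow(3)$: if $G$ is \emph{not} a Woodroofe graph, I must produce a weight function $\w$ with values in $\{1,2\}$ for which $(G,\w)$ fails to be sequentially Cohen-Macaulay. By hypothesis $G$ contains an induced chordless cycle $C_t$ with $t=4$ or $t\ge 6$. The strategy is to localize: sequential Cohen-Macaulayness passes to certain quotients and localizations, so it is enough to exhibit, for each such bad induced cycle, a specific $\{1,2\}$-valued weight assignment on $C_t$ that destroys the property and to show this obstruction survives inside $G$. Concretely, I would assign weight $2$ to a carefully chosen subset of the edges of the chordless cycle and weight $1$ elsewhere, then compute enough of the homological or combinatorial invariants of the resulting polarization to detect a failure of sequential Cohen-Macaulayness.

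I expect the main obstacle to be this last detection step: proving that a specific weighted chordless cycle $(C_t,\w)$ with $t=4$ or $t\ge 6$ is not sequentially Cohen-Macaulay. The natural route is again via polarization, reducing to showing that the associated unweighted graph $C_t^{\w}$ contains an induced chordless cycle of forbidden length (and hence, by the $(3)\Rightarrow(1)$ direction applied in reverse together with a converse characterization, is not sequentially Cohen-Macaulay). For an unweighted cycle $C_t$, the graph is already chordless, so the subtlety is entirely in how the weighting and subsequent polarization interact with cycle structure: I would track precisely which subdivided vertices and blown-up edges appear in $C_t^{\w}$ and verify that a suitable chordless cycle of length $4$ or $\ge 6$ persists. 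The delicate point is choosing the weights so that the polarized graph retains a forbidden induced cycle rather than accidentally becoming chordal through the blow-up, and then invoking a criterion—such as a reduced-homology computation of the associated independence complex, or a known result that graphs with an induced chordless cycle of length $\notin\{3,5\}$ are not sequentially Cohen-Macaulay—to conclude the failure.
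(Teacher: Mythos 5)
There are two genuine gaps, and both come from the same source: your main technical device does not do what you claim. First, the polarization of $I(G_\w)$ is \emph{not} the edge ideal of a simple graph. A generator $(x_ix_j)^{\w(x_ix_j)} = x_i^{w}x_j^{w}$ polarizes to a squarefree monomial of degree $2\w(x_ix_j)$, so for any edge of weight $\ge 2$ the polarized ideal has generators of degree $\ge 4$; it is the edge ideal of a hypergraph, not of a graph $G^{\w}$ obtained by ``subdividing and blowing up'' edges. Consequently the phrase ``chordless cycles of forbidden length in $G^{\w}$'' has no meaning in the sense required, and Woodroofe's theorem (a statement about graphs) cannot be invoked for the polarized object. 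This breaks your $(3)\Rightarrow(1)$ argument and also the detection step at the end of your $(2)\Rightarrow(3)$ argument. The paper avoids polarization entirely: by the associated-radical criterion of Hochster and Jafari--Sabzrou (Lemma \ref{lem_radical_transfer}), $I(G_\w)$ is sequentially Cohen--Macaulay if and only if $\sqrt{I(G_\w):u}$ is for every monomial $u \notin I(G_\w)$, and by Lemma \ref{radical_colon} every such radical colon has the form $I(G[W]) + (\text{variables})$. Since induced subgraphs of Woodroofe graphs are Woodroofe, hence vertex-decomposable, hence sequentially Cohen--Macaulay, $(3)\Rightarrow(1)$ follows at once.

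Second, and independently, the ``known result'' you hope to invoke --- that a graph containing an induced chordless cycle of length $\notin\{3,5\}$ is not sequentially Cohen--Macaulay --- is false, and the paper's own Example \ref{counterexm-Terai's} refutes it: the suspension of $C_t$, $t \ne 3,5$, is Cohen--Macaulay (hence sequentially Cohen--Macaulay) and contains $C_t$ as an induced subgraph. Sequential Cohen--Macaulayness is simply not inherited by induced subgraphs, and this failure of heredity is exactly the difficulty that the weights are designed to overcome; your appeal to ``quotients and localizations'' cannot supply it either, since restriction to an induced subgraph is not a link (localization) of the independence complex. Your instinct to put weight $2$ on the edges of the bad cycle is the right one, but the correct transfer mechanism is again Lemma \ref{lem_radical_transfer}: with $\w \equiv 2$ on $E(G[W])$ and $\w \equiv 1$ elsewhere, where $W = V(C_t)$, Lemma \ref{radical_colon} gives
\[
\sqrt{I(G_\w) : \textstyle\prod_{x_j \in W} x_j} \;=\; I(C_t) + (\text{variables not in } W) + I(G[W'])
\]
with $W' \cap W = \emptyset$. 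Since $I(C_t)$ is not sequentially Cohen--Macaulay for $t \ne 3,5$ (Francisco--Van Tuyl), this radical colon is not sequentially Cohen--Macaulay, and therefore neither is $I(G_\w)$. Without this lemma (or an equivalent substitute), your proposal has no valid way to push the obstruction from the induced cycle up to the whole weighted graph, nor to prove $(3)\Rightarrow(1)$; so the key idea of the paper is missing from the proposal.
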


Now we explain the organization of the paper. In Section 2, we prove Theorem \ref{thm_CM} and Theorem \ref{thm_sCM}. In Section 3, we give some applications of our main results. In particular, we provide counterexamples to Terai's conjecture.

\section{Proof of the main results}
Throughout the paper, we denote $S = K[x_1,\ldots, x_n]$ a standard graded polynomial ring over a field $K$. Let $\m = (x_1,\ldots, x_n)$ be the maximal homogeneous ideal of $S$. We first recall some notation and results.

For a finitely generated graded $S$-module $L$, the depth of $L$ is defined to be
$$\depth(L) = \min\{i \mid H_{\m}^i(L) \ne 0\},$$
where $H^{i}_{\m}(L)$ denotes the $i$-th local cohomology module of $L$ with respect to $\m$. 

\begin{defn}
    A finitely generated graded $S$-module $L$ is called Cohen-Macaulay if $\depth L = \dim L$. A homogeneous ideal $I\subseteq S$ is said to be Cohen-Macaulay if $S/I$ is a Cohen-Macaulay $S$-module.
\end{defn}

Let $I$ be a monomial ideal in $S$. In \cite{H}, Hochster introduced the set of associated radical ideals of $I$, namely $\sqrt{I:u}$ for monomials $u\notin I$ and proved that the Cohen-Macaulay property of $I$ can be characterized in terms of its associated radicals. In \cite{JS}, Jafari and Sabzrou showed that the sequentially Cohen-Macaulay property of $I$ can also be characterized in terms of its associated radicals. 

\begin{lem}\label{lem_radical_transfer}
    A monomial ideal $I$ is (sequentially) Cohen-Macaulay if and only if $\sqrt{I:u}$ is (sequentially) Cohen-Macaulay for all monomials $u \notin I$.
\end{lem}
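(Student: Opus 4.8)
The plan is to reduce to the squarefree case by polarization and then read everything off the combinatorics of the associated simplicial complex. Write $I^{\mathrm{pol}}\subseteq S^{\mathrm{pol}}$ for the polarization of $I$, a squarefree monomial ideal. Polarization is well known to preserve both the Cohen--Macaulay and the sequentially Cohen--Macaulay properties (the passage from $S^{\mathrm{pol}}/I^{\mathrm{pol}}$ to $S/I$ is by a regular sequence of linear forms, the differences of the new variables), so $I$ is (sequentially) Cohen--Macaulay if and only if $I^{\mathrm{pol}}$ is. The first task is therefore to transport the family of associated radicals through polarization, matching $\{\sqrt{I:u}\mid u\notin I\}$ with $\{\sqrt{I^{\mathrm{pol}}:v}\mid v\notin I^{\mathrm{pol}}\}$ modulo the same regular-sequence reduction; granting this, it suffices to prove the equivalence for squarefree ideals.

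So suppose $I=I_\D$ is the Stanley--Reisner ideal of a simplicial complex $\D$. For a monomial $u$ with $\supp(u)=\sigma$ one has $u\notin I_\D$ exactly when $\sigma\in\D$, and in that case $\sqrt{I_\D:u}=I_\D:x^{\sigma}$ is already squarefree and equals the Stanley--Reisner ideal of $\lk_\D\sigma$ (up to the harmless polynomial extension by the variables of $\sigma$); every associated radical of $I_\D$ arises this way. Hence the right-hand condition says precisely that $\lk_\D\sigma$ is (sequentially) Cohen--Macaulay for every face $\sigma\in\D$. The backward implication is then immediate by taking $\sigma=\emptyset$, i.e. $u=1$: since $\lk_\D\emptyset=\D$, the hypothesis already contains the (sequentially) Cohen--Macaulayness of $I_\D$. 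For the forward implication I would use that links inherit these properties. In the Cohen--Macaulay case this is Reisner's criterion --- the vanishing conditions $\h_i(\lk_\D\tau;K)=0$ for $i<\dim\lk_\D\tau$ needed for $\lk_\D\sigma$ are a subfamily of those for $\D$, via $\lk_{\lk_\D\sigma}\tau=\lk_\D(\sigma\cup\tau)$. In the sequentially Cohen--Macaulay case I would invoke the characterization that $\D$ is sequentially Cohen--Macaulay if and only if each pure skeleton $\D^{[i]}$ is Cohen--Macaulay, together with the compatibility $(\lk_\D\sigma)^{[i]}=\lk_{\D^{[\,i+\dim\sigma+1]}}\sigma$; Cohen--Macaulayness of the skeleta of $\D$ then passes to those of the link.

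Once the squarefree case is in place, both implications are bookkeeping with Reisner's criterion and the skeleton characterization, so the main obstacle is the polarization step. The delicate point is that, unlike the squarefree situation, $\sqrt{I}$ being Cohen--Macaulay does not force $S/I$ to be Cohen--Macaulay, so the entire family of colons is genuinely needed and must be carried faithfully across polarization; this requires controlling how the colon and radical operations interact with the substitution $x_i^{k}\mapsto x_{i,1}\cdots x_{i,k}$. An alternative that sidesteps polarization is to use Takayama's formula, which expresses each local cohomology module $H^i_\m(S/I)$ in a fixed multidegree through the reduced homology of a simplicial complex that is exactly the Stanley--Reisner complex of an associated radical $\sqrt{I:u}$; then $\depth(S/I)$, and the whole sequentially Cohen--Macaulay filtration, can be read directly from the homology of these complexes, again reducing the statement to Reisner's criterion applied to each $\sqrt{I:u}$.
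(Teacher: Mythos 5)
Your squarefree argument is correct, and it is a genuinely different route from the paper's, whose entire proof is a citation (Hochster's Theorem 7.1 for the Cohen--Macaulay half, Jafari--Sabzrou for both halves). But everything in your proposal rests on the polarization step, and that step is not merely unproven (you flag it yourself as the main obstacle) --- the matching you want between $\{\sqrt{I:u} \mid u\notin I\}$ and $\{\sqrt{I^{\mathrm{pol}}:v} \mid v\notin I^{\mathrm{pol}}\}$ ``modulo the regular-sequence reduction'' does not exist. Take $I=(x^2,xy)\subset K[x,y]$. Its associated radicals are $\sqrt{I:1}=\sqrt{I:y^k}=(x)$ and $\sqrt{I:x}=(x,y)$, all Cohen--Macaulay. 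Its polarization $I^{\mathrm{pol}}=(x_1x_2,\,x_1y_1)$ is itself one of its own associated radicals (take $v=1$), and it is \emph{not} Cohen--Macaulay: its Stanley--Reisner complex is the disjoint union of the vertex $x_1$ and the edge $\{x_2,y_1\}$, disconnected of dimension one. Since the reductions you invoke (regular sequences of linear forms, polynomial extensions) preserve Cohen--Macaulayness, no correspondence between the two families can exist. So the non-squarefree case --- which is the whole content of the lemma, since in the squarefree case the statement is easy exactly as you show via $\lk_\D\sigma$, Reisner and Duval --- is untouched. Your fallback via Takayama's formula is only a pointer; carrying it out, with the filtration bookkeeping for the sequential case, is essentially the content of the Jafari--Sabzrou paper that the authors cite.

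The same example shows why the gap cannot be closed in the form you (and, read literally, the lemma itself) state it: $S/(x^2,xy)$ has depth $0$ and dimension $1$, hence is not Cohen--Macaulay, even though every associated radical is Cohen--Macaulay. The correct general statement for the plain Cohen--Macaulay property requires in addition that $\dim S/\sqrt{I:u}=\dim S/I$ for every monomial $u\notin I$; this dimension hypothesis is what the cited results of Hochster and Jafari--Sabzrou carry, and it is the reason the paper's proof mentions the inequality $\dim S/\sqrt{I:u}\le \dim S/I$. Only the sequentially Cohen--Macaulay half holds in the clean biconditional form (and indeed $(x^2,xy)$ is sequentially Cohen--Macaulay, via the filtration $0\subset (x)/I\subset S/I$ with quotients $K$ and $S/(x)$). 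Note that this subtlety is invisible in the squarefree world: Cohen--Macaulay complexes are pure, so links automatically have the correct dimension. That is precisely why your squarefree argument goes through smoothly while telling you nothing about where the real difficulty --- and the failure --- of the general case lies.
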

\begin{proof} The statement for Cohen-Macaulay property follows from \cite[Theorem 7.1]{H} and the fact that $\dim S/\sqrt{I:u} \le \dim S/I$ for all monomials $u \notin I$. See also \cite[Proposition 2.8]{JS}. The statement for sequentially Cohen-Macaulay is \cite[Proposition 2.23]{JS}.
\end{proof}

The associated radicals also play an important role in studying the regularity of monomial ideals (see \cite{MNPTV}). The associated radicals can be computed as follows. For a monomial $f$ in $S$, the support of $f$, denoted $\supp(f)$, is the set of $x_i$ such that $x_i|f$. The radical of $f$ is defined by $\sqrt{f} = \prod_{x_i \in \supp f} x_i$. For an exponent $\a \in \NN^n$, denote $x^\a$ the monomial $x_1^{a_1} \cdots x_n^{a_n}$.

\begin{lem}\label{radical_colon} Let $J$ be a monomial ideal in $S$ generated by the monomials $f_1, \ldots, f_r$ and $x^\a$ a monomial in $S$. Then $\sqrt{I:x^\a}$ is generated by $$\sqrt{f_1/\gcd(f_1, x^\a)}, \ldots, \sqrt{f_r/\gcd(f_r,x^\a)}.$$
\end{lem}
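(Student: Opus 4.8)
The plan is to split the computation of $\sqrt{J:x^\a}$ into two elementary, purely combinatorial facts about monomial ideals, neither of which needs more than an exponent-by-exponent comparison. Since $J:x^\a$ is again a monomial ideal, it suffices to first identify a monomial generating set for the colon and then pass to its radical.

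First I would compute the colon ideal $J:x^\a$ explicitly. A monomial $x^\b$ lies in $J:x^\a$ precisely when $x^{\a+\b}$ is divisible by some generator $f_i$; comparing exponents in each variable separately, this occurs exactly when $x^\b$ is divisible by $f_i/\gcd(f_i,x^\a)$ for some $i$ (with the convention that truncated-at-zero exponents are used, so that $f_i/\gcd(f_i,x^\a)$ is an honest monomial). Hence
$$J:x^\a = \bigl( f_1/\gcd(f_1,x^\a),\ \ldots,\ f_r/\gcd(f_r,x^\a) \bigr).$$

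Second, I would invoke the general fact that for any monomial ideal $M=(g_1,\ldots,g_s)$ one has $\sqrt{M}=(\sqrt{g_1},\ldots,\sqrt{g_s})$, where $\sqrt{g_i}$ denotes the squarefree part as defined above. For the inclusion $(\sqrt{g_1},\ldots,\sqrt{g_s})\subseteq\sqrt{M}$, note that a sufficiently high power of $\sqrt{g_i}$ is divisible by $g_i$, so $\sqrt{g_i}\in\sqrt{M}$. For the reverse inclusion, observe that $(\sqrt{g_1},\ldots,\sqrt{g_s})$ is a squarefree monomial ideal, hence radical, and it contains each $g_i$ and therefore $M$; taking radicals gives $\sqrt{M}\subseteq(\sqrt{g_1},\ldots,\sqrt{g_s})$. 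Applying this to the presentation of $J:x^\a$ from the first step yields exactly the generators $\sqrt{f_1/\gcd(f_1,x^\a)},\ldots,\sqrt{f_r/\gcd(f_r,x^\a)}$ claimed in the statement.

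The argument is routine and I do not anticipate a genuine obstacle; the only points demanding a little care are the exponent bookkeeping in the colon computation and verifying that forming the squarefree part commutes with taking the ideal of generators, both of which reduce to comparing exponents coordinatewise.
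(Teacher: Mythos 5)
Your proof is correct. Note that the paper itself does not prove this lemma at all: its ``proof'' is a one-line citation to \cite[Lemma 2.24]{MNPTV}, so your argument is genuinely different in that it is self-contained. Your two-step decomposition is the natural one and both steps are sound: the exponentwise computation showing $J:x^\a = \bigl(f_1/\gcd(f_1,x^\a), \ldots, f_r/\gcd(f_r,x^\a)\bigr)$ is the standard colon formula for monomial ideals (using that $J:x^\a$ is again a monomial ideal, so checking membership on monomials suffices), and your proof that $\sqrt{M} = (\sqrt{g_1},\ldots,\sqrt{g_s})$ is clean --- one inclusion because a high power of $\sqrt{g_i}$ is a multiple of $g_i$, the other because a squarefree monomial ideal is radical and contains $M$. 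What your version buys is that the lemma is exposed as elementary coordinatewise bookkeeping rather than an external black box; what the paper's citation buys is brevity and consistency with the source \cite{MNPTV} where this formula is used in context. One small point you handled silently: the statement as printed has a typo, writing $\sqrt{I:x^\a}$ where the ideal is named $J$; your reading of it as $\sqrt{J:x^\a}$ is the intended one.
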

\begin{proof}
	See \cite[Lemma 2.24]{MNPTV}.
\end{proof}

Let $G$ denote a finite simple graph over the vertex set $V(G)= \{x_1,x_2,\ldots,x_n\}$ and the edge set $E(G)$. A subgraph $H=G[W]$ is called an induced subgraph of $G$ on $W\subset V(G)$ if for any vertices $u,v\in W$ then $uv\in E(H)$ if and only if $uv\in E(G)$. Let $\w: E(G) \to \ZZ_{>0}$ be a weight function on the edges of $G$. We first show that the property that $I(G_\w)$ are (sequentially) Cohen-Macaulay for all weight functions $\w$ is equivalent to the property that all induced subgraphs of $G$ are (sequentially) Cohen-Macaulay.

\begin{lem}\label{lem_induced_subgraphs} Let $G$ be a simple graph. The following statements are equivalent. 
\begin{enumerate}
		\item $(G,\w)$ is (sequentially) Cohen-Macaulay for all weight functions $\w$; 
  \item $(G,\w)$ is (sequentially) Cohen-Macaulay for all weight functions $\w$ such that $\w(x_ix_j)\in\{1,2\}$ for all edges $x_i x_j \in E(G)$; 
		\item $G[W]$ is (sequentially) Cohen-Macaulay for all subsets $W \subseteq V(G)$. 
	\end{enumerate}
\end{lem}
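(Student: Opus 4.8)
The plan is to run everything through Lemma~\ref{lem_radical_transfer} and the explicit description of associated radicals in Lemma~\ref{radical_colon}. Writing $f_e = (x_ix_j)^{\w(e)}$ for the edge $e=\{x_i,x_j\}$, one computes $\sqrt{f_e/\gcd(f_e,x^\a)}$ to be the full monomial $x_ix_j$ when $a_i,a_j<\w(e)$, the single variable $x_i$ (resp.\ $x_j$) when $a_i<\w(e)\le a_j$ (resp.\ $a_j<\w(e)\le a_i$), and $1$ exactly when $x^\a\in I(G_\w)$. Setting $A=\{x_i : x_i\in \sqrt{I(G_\w):x^\a}\}$ and $W=V(G)\setminus A$, I would check that every edge with both ends in $W$ contributes the full monomial $x_ix_j$, while every edge meeting $A$ is redundant, so that
$$\sqrt{I(G_\w):x^\a}=(x_i : x_i\in A)+I(G[W]),$$
and hence $S/\sqrt{I(G_\w):x^\a}\cong S_W/I(G[W])$ with $S_W=K[x_i : x_i\in W]$. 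As killing the variables of $A$ affects neither depth nor dimension, this quotient is (sequentially) Cohen--Macaulay if and only if $G[W]$ is.

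Given this dictionary, $(1)\Rightarrow(2)$ is immediate, and $(3)\Rightarrow(1)$ follows at once: for every $\w$ and every $x^\a\notin I(G_\w)$ the radical is (sequentially) Cohen--Macaulay because the relevant $G[W]$ is, so Lemma~\ref{lem_radical_transfer} applies. The real work is $(2)\Rightarrow(3)$, where I must produce, for a prescribed $W$, a weight function with values in $\{1,2\}$ and a monomial $x^\a\notin I(G_\w)$ realizing exactly $W(\a)=W$. When every vertex of $A=V(G)\setminus W$ has a neighbour in $W$, I would take $x^\a=\prod_{x_i\in W}x_i$ and set $\w(e)=1$ on the edges joining $A$ to $W$ and $\w(e)=2$ on all other edges; a direct check with the formulas above shows $x^\a\notin I(G_\w)$, that each vertex of $A$ becomes a single-variable generator while no vertex of $W$ does, so the radical is exactly $(x_i : x_i\in A)+I(G[W])$, and $G[W]$ inherits the (sequentially) Cohen--Macaulay property from $(2)$ through Lemma~\ref{lem_radical_transfer}.

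The main obstacle is precisely the hypothesis that every vertex of $A$ has a neighbour in $W$: an \emph{interior} vertex $v\in A$ with $N_G(v)\subseteq A$ can never appear as a single-variable generator (all its edges are full), so it cannot be stripped off by a colon, and such an $A$ genuinely cannot be realized. I would handle this by induction on $|V(G)\setminus W|$. If $A$ has no interior vertex, the construction above applies; otherwise pick an interior $v\in A$ and pass to $W'=W\cup\{v\}$, whose complement is smaller, so $G[W']$ is (sequentially) Cohen--Macaulay by induction. Since $v$ has no neighbour in $W$, it is isolated in $G[W']$, whence $G[W']=G[W]\sqcup\{v\}$ and $S_{W'}/I(G[W'])=(S_W/I(G[W]))[x_v]$ is a polynomial extension; as adjoining a variable preserves both the Cohen--Macaulay and the sequentially Cohen--Macaulay property, $G[W]$ is (sequentially) Cohen--Macaulay as well. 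The base case $W=V(G)$ is just $(2)$ applied to the constant weight $\w\equiv 1$. The delicate points throughout are the explicit radical computation of the first paragraph and the argument that interior vertices, which obstruct the direct realization, can always be absorbed into $W$ as isolated vertices.
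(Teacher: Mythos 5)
Your proof is correct, and it follows the same skeleton as the paper's: both arguments funnel everything through Lemma~\ref{lem_radical_transfer} and the explicit colon computation of Lemma~\ref{radical_colon}, both get $(3)\Rightarrow(1)$ from the observation that every associated radical of $I(G_\w)$ has the form $(\text{variables})+I(G[W])$, and both prove $(2)\Rightarrow(3)$ by colonning a $\{1,2\}$-weighted edge ideal by $x^\a=\prod_{x_i\in W}x_i$. The genuine difference lies in how the complement $A=V(G)\setminus W$ is handled. The paper gives weight $1$ to \emph{all} edges not inside $G[W]$, so the vertices of $A$ with no neighbour in $W$ survive in the radical as a second edge-ideal component, $\sqrt{I(G_\w):x^\a}=I(G[W])+(\text{variables})+I(G[W'])$ with $W'$ non-adjacent to $W$, and it then invokes \cite[Lemma 4.1]{V} and \cite[Lemma 20]{Wo} to peel $I(G[W])$ off this disjoint union. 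You instead give weight $2$ to the edges inside $A$, which makes all of their contributions redundant as soon as every vertex of $A$ has a neighbour in $W$, so the radical is exactly $(x_i:x_i\in A)+I(G[W])$; the leftover ``interior'' vertices are then absorbed one at a time by induction on $|A|$, using only the fact that adjoining an isolated vertex (equivalently, a polynomial extension $R\mapsto R[x]$) does not change the (sequentially) Cohen--Macaulay property. Your route trades the full disjoint-union lemma for its easiest special case plus an induction, which is somewhat more self-contained; on the other hand, the polynomial-extension fact for sequential Cohen--Macaulayness, which you assert without proof or citation, is precisely the kind of statement the paper's reference to \cite[Lemma 20]{Wo} is there to cover, and note that what your induction actually uses is the ``reflects'' direction ($R[x]$ sequentially CM $\Rightarrow$ $R$ sequentially CM) rather than the ``preserves'' direction you name; since the fact is an equivalence, this is harmless but worth stating accurately.
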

\begin{proof} It is obvious that $(1) \Rightarrow (2)$. Now, we prove    $(2) \Rightarrow (3)$. Let $W$ be any subset of $V(G)$. Let $\w$ be the weight function defined as follows: 
    $$\w(e)=
\begin{cases}
2\text{ if } e \in G[W],\\
1\text{ otherwise}.
\end{cases}$$
Let $x^\a = \prod_{x_j \in W} x_j$. We first show that 
\begin{equation}\label{eq_colon}
    \sqrt{I(G_\w):x^\a} = I(G[W]) + (\text{some variables not in W}) + I(G[W']),
\end{equation}
where $W'$ is a subset of $V(G) \setminus W$. By Lemma \ref{radical_colon} $\sqrt{I(G_\w): x^\a}$ is generated by $\sqrt{e^{\w(e)}/\gcd(e^{\w(e)},x^\a)}$ for all edge $e$ of $G$. We have three cases: 

\vspace{1mm}
\noindent \textbf{Case 1.} $e \in G[W]$. In this case, $\w(e) = 2$ and $e^2/\gcd(e^2,x^\a) = e$.

\vspace{1mm}
\noindent \textbf{Case 2.} $|\supp e \cap W| = 1$. Assume that $e = xy$ with $x \in W$ and $y\notin W$. Then $e/\gcd(e,x^\a) = y \notin W$. 

\vspace{1mm}
\noindent \textbf{Case 3.} $|\supp e \cap W| = 0$. Then $e / \gcd(e,x^\a) = e$. 

Eq. \eqref{eq_colon} follows. By Lemma \ref{lem_radical_transfer}, $\sqrt{I(G_\w):x^\a}$ is (sequentially) Cohen-Macaulay. In particular, $I(G[W]) + I( G[W'])$ is (sequentially) Cohen-Macaulay. Since $W \cap W' = \emptyset$, by \cite[Lemma 4.1]{V} and \cite[Lemma 20]{Wo}, we deduce that $I(G[W])$ is (sequentially) Cohen-Macaulay.

$(3) \Rightarrow (1).$ By Lemma \ref{radical_colon}, any minimal generator of $\sqrt{I(G_\w) : x^\a}$ for any weight function $\w$ and any monomial $x^\a$ such that $x^\a \notin I(G_\w)$ is either $xy$ where $xy$ is an edge of $G$ or a variable. Hence, $\sqrt{I(G_\w):x^\a} = I(G[W]) + (\text{some variables})$ for some subset $W$ of $V(G)$. By assumption, they are (sequentially) Cohen-Macaulay. By Lemma \ref{lem_radical_transfer}, $I(G_\w)$ is (sequentially) Cohen-Macaulay.
\end{proof}

By Lemma \ref{lem_induced_subgraphs}, we see that studying the class of graphs for which $I(G_\w)$ is (sequentially) Cohen-Macaulay is equivalent to the problem of finding obstructions to (nonpure) shellability of flag complexes. This observation leads us to the proof of the main results.

\begin{proof}[Proof of Theorem \ref{thm_CM}] By Lemma \ref{lem_induced_subgraphs}, the Theorem follows from the following facts.
\begin{enumerate}
    \item Disjoint unions of complete graphs are Cohen-Macaulay \cite{V}.
    \item Induced subgraphs of a disjoint union of complete graphs are disjoint unions of complete graphs.
    \item $P_3$ a path of length $2$ is not Cohen-Macaulay.
    \item $P_3$ is not an induced subgraph of $G$ then $G$ is a disjoint union of complete graphs.
\end{enumerate}
\end{proof}

\begin{proof}[Proof of Theorem \ref{thm_sCM}] By Lemma \ref{lem_induced_subgraphs}, the Theorem follows from the definition of Woodroofe graphs and the following facts.
\begin{enumerate}
    \item Woodroofe graphs are sequentially Cohen-Macaulay \cite[Theorem 1]{Wo}.
    \item Induced subgraphs of a Woodroofe graph are Woodroofe graphs.
    \item The cycles $C_t$ are not sequentially Cohen-Macaulay for $t \neq 3,5$ (see \cite[Proposition 4.1]{FT} and \cite[Theorem 10]{Wo}).
\end{enumerate}
\end{proof}

\begin{rem} 
\begin{enumerate}
    \item The simplicial complexes corresponding to edge ideals of disjoint unions of complete graphs are matroid of rank $2$. 
    \item One can show that the classification of hypergraphs $\H$ whose edge ideals of edge-weighted hypergraphs $(\H,\w)$, $I(\H_\w)$ is (sequentially) Cohen-Macaulay for all weight functions $\w$ also reduces to study the obstructions to (nonpure) shellablity of simplicial complexes. 
    \item We can show that $I(\H_\w)$ is Cohen-Macaulay for all weight functions $\w$ if and only if $\Delta(I_\H)$ is a matroid. 
    \item The obstruction to nonpure shellability is much more subtle (see \cite{Wa} for some partial results) and we will leave that for future work.
\end{enumerate}
\end{rem}

\section{Applications}

In this section, we will give some applications of our results. Firstly, one knows that when $J$ is sequentially Cohen-Macaulay, then $J$ is Cohen-Macaulay if and only if it is unmixed. Therefore, we obtain

\begin{cor} Let $G$ be a Woodroofe graph and $\w: E(G) \to \ZZ_{>0}$ a weight function. Then $I(G_{\w})$ is Cohen-Macaulay if and only if $I(G_{\w})$ is unmixed.
\end{cor}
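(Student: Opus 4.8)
The plan is to deduce the corollary directly from Theorem \ref{thm_sCM} together with the standard interplay between the sequentially Cohen-Macaulay and unmixed properties; the substantive content has already been carried out in proving Theorem \ref{thm_sCM}, so this is essentially a formal consequence. First I would invoke Theorem \ref{thm_sCM}: since $G$ is a Woodroofe graph, $I(G_\w)$ is sequentially Cohen-Macaulay for every weight function $\w$, and in particular for the given one. This is the only place where the hypothesis on $G$ is used.

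One implication does not even require sequential Cohen-Macaulayness. If $I(G_\w)$ is Cohen-Macaulay, then $S/I(G_\w)$ is a Cohen-Macaulay module, and every Cohen-Macaulay module is unmixed, since all of its associated primes have dimension equal to $\dim S/I(G_\w)$. Hence $I(G_\w)$ is unmixed. For the converse I would use the general fact, recalled in the introduction, that a sequentially Cohen-Macaulay module is Cohen-Macaulay as soon as it is unmixed. Concretely, take a filtration $0 = M_0 \subset M_1 \subset \cdots \subset M_r = M = S/I(G_\w)$ as in the definition, with $\dim(M_i/M_{i-1})$ strictly increasing and each quotient Cohen-Macaulay. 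The associated primes of $M$ are distributed among the quotients $M_i/M_{i-1}$, and those of smaller index contribute associated primes of strictly smaller dimension. If $M$ is unmixed, then all associated primes share the top dimension $\dim M$, which forces the lower-dimensional quotients to vanish, so $r = 1$ and $M = M_1$ is Cohen-Macaulay.

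I do not expect a genuine obstacle here: once Theorem \ref{thm_sCM} is in hand, the statement is a direct application of the equivalence ``sequentially Cohen-Macaulay and unmixed'' $\iff$ ``Cohen-Macaulay''. The only care needed is to fix the meaning of unmixed as requiring that all associated primes of $S/I(G_\w)$ have the same dimension, so that the filtration argument above applies verbatim.
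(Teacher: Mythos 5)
Your proposal is correct and follows exactly the paper's route: invoke Theorem \ref{thm_sCM} (via Woodroofe's theorem) to get that $I(G_\w)$ is sequentially Cohen--Macaulay, then apply the standard fact that a sequentially Cohen--Macaulay ideal is Cohen--Macaulay if and only if it is unmixed. The paper simply cites this fact without proof, whereas you spell out the filtration argument (correctly, since $\mathrm{Ass}(M_1)\subseteq\mathrm{Ass}(M)$ forces the lower-dimensional pieces to vanish), but the approach is the same.
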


When $G$ is a Cohen-Macaulay graph, a weight function $\w$ on edges of $G$ is called Cohen-Macaulay if $(G,\w)$ is Cohen-Macaulay. Before giving our next application, we recall the result of Paulsen and Sather-Wagstaff \cite[Theorem 4.4]{PS} on edge-weighted graph $(C_5,\w)$. They proved that $\w$ is Cohen-Macaulay if and only if there exists a vertex $v$ so that the weights on edges of $C_5$ starting from $v$ in clockwise order are of the form $a,b,c,d,a$ and that $a \le b \ge c\le d \ge a$. We call such a vertex $v$ a balancing vertex of $\w$. 

Let $H$ be a graph formed by connecting two $5$-cycles by a path. By \cite[Theorem 2.4]{HMT}, $H$ is Cohen-Macaulay if and only if this path is of length $1$. We may assume that the vertices of $H$ are $\{x_1, \ldots, x_5, y_1, \ldots, y_5\}$ and edges of $H$ are $\{x_1x_2, \ldots, x_4x_5,x_1x_5,y_1y_2, \ldots,y_4y_5,y_1y_5,x_1y_1\}$. Note that $I(H) + (x_i)$ and $I(H) + (y_i)$ are not Cohen-Macaulay for $i \in \{2,5\}$. With this assumption, we have
\begin{prop} The edge-weighted graph $(H,\w)$ is Cohen-Macaulay if and only if $\w$ satisfies the following conditions:
\begin{enumerate}
    \item $\w(x_1y_1) \le \min \{\w(x_1x_2),\w(x_1x_5),\w(y_1y_2),\w(y_1y_5)\},$
    \item The induced edge-weighted graphs of $(H,\w)$ on $\{x_1,\ldots,x_5\}$ and $\{y_1,\ldots,y_5\}$ are Cohen-Macaulay.
    \item Balancing vertices of $\w$ on $\{x_1, \ldots,x_5\}$ and 
    $\{y_1,\ldots,y_5\}$ can be chosen among $\{x_1,x_3,x_4\}$ and $\{y_1,y_3,y_4\}$ respectively.
\end{enumerate}
\end{prop}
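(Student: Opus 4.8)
The plan is to reduce everything to a question about unmixedness and then to the combinatorics already exploited in the proof of Lemma~\ref{lem_induced_subgraphs}. First observe that $H$ is a Woodroofe graph: its only chordless cycles are the two $5$-cycles, since the bridge $x_1y_1$ is a cut edge. Hence, by Theorem~\ref{thm_sCM}, $(H,\w)$ is sequentially Cohen-Macaulay for every $\w$, so $I(H_\w)$ is Cohen-Macaulay if and only if it is unmixed. As $\sqrt{I(H_\w)}=I(H)$ and every maximal independent set of $H$ has size $4$, we have $\dim S/I(H_\w)=4$, and unmixedness means exactly that no embedded associated prime occurs, i.e.\ no monomial prime $P_W=(x_i\colon i\in W)$ with $|W|\ge 7$ lies in $\operatorname{Ass}(S/I(H_\w))$.

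Next I would recast this through Lemma~\ref{lem_radical_transfer} and Lemma~\ref{radical_colon}, as in Eq.~\eqref{eq_colon}. For $x^\a\notin I(H_\w)$ and an edge $e=uv$ of weight $w$, the generator $\sqrt{e^{\w(e)}/\gcd(e^{\w(e)},x^\a)}$ equals $uv$ when $a_u<w$ and $a_v<w$, and the single variable $u$ when $a_u<w\le a_v$. Thus every radical $J_\a:=\sqrt{I(H_\w):x^\a}$ has the form $I(G_\a)+(x_i\colon i\in D_\a)$, where $D_\a$ is the set of ``contracted'' vertices and $G_\a$ is the subgraph of $H$ on the remaining vertices keeping precisely the surviving edges. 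Since the only cycles of $H$ are its two $5$-cycles, each $G_\a$ is again Woodroofe, hence sequentially Cohen-Macaulay, so $J_\a$ is Cohen-Macaulay if and only if $G_\a$ is well-covered. By Lemma~\ref{lem_radical_transfer}, $(H,\w)$ is therefore Cohen-Macaulay if and only if $G_\a$ is well-covered for every admissible $\a$.

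For necessity I would argue by contraposition, producing an $\a$ with $G_\a$ not well-covered whenever one of the three conditions fails. If (2) fails, then one of the induced weighted $5$-cycles, say on $\{x_1,\dots,x_5\}$, is not unmixed; its own witnessing colon $\sqrt{I((C_5)_\w):x^\b}$ is not well-covered, and I would extend $\b$ over the $y$-variables so as to strip the bridge and leave the second cycle as an isolated well-covered piece, reproducing the bad configuration in $G_\a$. If (3) fails—say the balancing description of Paulsen--Sather-Wagstaff forces the balancing vertex of the $x$-cycle to be $x_2$ or $x_5$—I would choose $\a$ so that $J_\a=I(H)+(x_i)$ with $i\in\{2,5\}$, which is not Cohen-Macaulay by the remark preceding the statement. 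If (1) fails, so the bridge weight exceeds one of the four adjacent weights, I would pick $\a$ keeping $x_1$ (or $y_1$) alive while contracting a neighbouring edge, gluing the two sides into a tree that is not well-covered.

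For sufficiency, assuming (1)--(3), I would classify the subgraphs $G_\a$ and verify that each is well-covered. Every $G_\a$ is a disjoint union of subpaths of the two $5$-cycles, possibly one or both intact $5$-cycles, possibly the bridge, and some isolated vertices; condition~(2) guarantees balancedness of any surviving full cycle, condition~(3) prevents the contractions from opening a cycle at $x_2,x_5$ (resp.\ $y_2,y_5$) into a non-well-covered path, and condition~(1) governs the configurations straddling the bridge. I expect the main obstacle to be exactly this sufficiency bookkeeping: enumerating the admissible $\a$ and certifying well-coveredness of every resulting $G_\a$, and in particular proving that condition~(3) is precisely equivalent to never producing the non-Cohen-Macaulay ideals $I(H)+(x_i)$, $i\in\{2,5\}$. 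Pinning down that equivalence will require the explicit normal form for $(C_5,\w)$ together with a careful account of which vertices a colon can contract near the bridge, where the three conditions genuinely interact.
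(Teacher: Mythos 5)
Your reduction is sound and in fact matches the machinery the paper builds: $H$ is Woodroofe, so by Theorem~\ref{thm_sCM} every $(H,\w)$ is sequentially Cohen--Macaulay, hence Cohen--Macaulay if and only if unmixed; and via Lemmas~\ref{lem_radical_transfer} and \ref{radical_colon} every colon radical has the form $(\text{variables})+I(G_\a)$ with $G_\a$ an \emph{induced} subgraph of $H$, which is again Woodroofe, so the whole problem becomes: every $G_\a$ that can arise must be well-covered. This reformulation through well-coveredness is a legitimate (and arguably cleaner) recasting of the paper's criterion. But the proposal stops at the reformulation. The entire sufficiency direction --- which is the bulk of the paper's proof --- is explicitly deferred (``I expect the main obstacle to be exactly this sufficiency bookkeeping''). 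The paper does this work concretely: Claim A (dropping exponents that are below all adjacent weights), the split into the cases $a_1 \ge \w(x_1y_1)$ and $a_1 < \w(x_1y_1)$, and the classification showing the set $W_1$ of killed $x$-vertices must lie in $P = \{\{2,4\},\{2,3,4\},\{3,5\},\{3,4,5\},\{3\},\{4\},\emptyset\}$, where fact (4) (that $W_1$ cannot contain $\{2,5\}$) is exactly the place where conditions (1)--(3) interact with admissibility of $\a$. Announcing that this bookkeeping remains to be done is not a proof of it.

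There are also concrete problems in your necessity sketches. For condition (1) you propose to ``glue the two sides into a tree that is not well-covered,'' but colon radicals can only produce \emph{induced} subgraphs of $H$, and killing a vertex of a $5$-cycle requires a neighbor's exponent to reach the corresponding edge weight --- an inequality you do not have in general, since the only hypothesis available is $\w(x_1y_1) > \w(y_1y_2)$. So you generally cannot break the $x$-cycle at all, and no tree straddling the bridge is reachable. The paper's witness keeps the $x$-cycle intact: $\sqrt{I : y_1^{a-1}y_4^{c}} = I(H[x_1,\ldots,x_5,y_1])+(y_2,y_3,y_5)$, i.e.\ a $5$-cycle with a pendant vertex, which is not well-covered. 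For condition (3), your plan to produce $J_\a = I(H)+(x_i)$, $i\in\{2,5\}$, needs the implication ``no balancing vertex in $\{x_1,x_3,x_4\}$ $\Rightarrow$ $\w(x_2x_3) < \w(x_3x_4)$ or $\w(x_4x_5) < \w(x_3x_4)$,'' which is a genuine (if elementary) argument with the Paulsen--Sather-Wagstaff normal form that you acknowledge but do not carry out; the paper instead proves the two weight inequalities are necessary (via $\sqrt{I:x_3^{\w(x_3x_4)-1}} = I+(x_2)$ and its mirror) and then invokes that form. In short: the framework is right, the necessity sketch for (2) matches the paper, but the proof of (1)'s necessity as described would fail, and the decisive case analyses for (3) and for sufficiency are missing.
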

\begin{proof} Denote $I = I(H_\w)$. Let $(H_1,\w_1)$ and $(H_2,\w_2)$ be the induced edge-weighted graphs of $(H,\w)$ on $\{x_1, \ldots, x_5\}$ and $\{y_1, \ldots, y_5\}$ respectively. 

First, we prove that if $\w$ is Cohen-Macaulay, it must satisfy the above conditions. For (1), assume by contradiction that $\w(x_1y_1) = a > \w(y_1y_2) = b$. Let $c = \max(\w(y_3y_4),\w(y_4y_5))$. Then 
$$\sqrt{I : y_1^{a-1} y_4^c} = I (H[x_1,\ldots,x_5,y_1]) + (y_2,y_3,y_5).$$
In particular, it is not Cohen-Macaulay. By Lemma \ref{lem_radical_transfer}, $I(H_\w)$ is not Cohen-Macaulay, a contradiction. By symmetry, $\w$ must satisfy condition (1).

We now prove that $(H_2, \w_2)$ must be Cohen-Macaulay. Assume by contradiction that, $(H_2,\w_2)$ is not Cohen-Macaulay. By Lemma \ref{lem_radical_transfer}, there exists an exponent $y^\b$ such that $\sqrt{I(H_2,\w_2) : y^\b}$ is not Cohen-Macaulay. Then we have 
$$\sqrt{I(H_\w) : x_2^{a_2}x_4^{a_4} y^\b} = \sqrt{I(H_2,\w_2):y^\b} + (x_1,x_3,x_5),$$
where $a_2 = \max(\w(x_2x_1),\w(x_3x_2))$ and $a_4 = \max(\w(x_3x_4),\w(x_4x_5))$. In particular, it is not Cohen-Macaulay. By Lemma \ref{lem_radical_transfer}, $I(H_\w)$ is not Cohen-Macaulay, a contradiction. By symmetry, $\w$ must satisfy condition (2).

Now note that if $\w(x_2x_3) < \w(x_3x_4)$ then $\sqrt{I:x_3^b} = I + (x_2)$ where $b = \w(x_3x_4) - 1$. Since $I + (x_2)$ is not Cohen-Macaulay, this implies a contradiction. Hence, $\w(x_2x_3) \ge \w(x_3x_4)$. By symmetry, we deduce that $\w(x_4x_5) \ge \w(x_3x_4)$. By \cite[Theorem 4.4]{PS} and the previous claim that $(H_1, \w_1)$ is Cohen-Macaulay, we deduce that a balancing vertex of $\w$ on $\{x_1, \ldots, x_5\}$ can be chosen among $\{x_1,x_3,x_4\}$. By symmetry, $\w$ must satisfy condition (3). 

It remains to prove that if $\w$ satisfies conditions (1), (2), (3), then $I = I(H_\w)$ is Cohen-Macaulay. By Lemma \ref{lem_radical_transfer}, it suffices to prove that $\sqrt{I:x^\a y^\b}$ is Cohen-Macaulay for all exponents $\a, \b$ such that $x^\a y^\b \notin I$. Denote $I_{\a,\b} = \sqrt{I:x^\a y^\b}$. First, we have

\vspace{1mm}
\noindent \textbf{Claim A.} Assume that $(G,\w)$ is an edge-weighted graph and $x^\a$ is an exponent such that $a_i < \w(e)$ for all edges $e$ adjacent to $i$ then 
\begin{equation}
    \sqrt{I(G_\w):x^\a} = \sqrt{I(G_\w):x^\b},
\end{equation}
with $x^\b = x_1^{a_1} \cdots x_{i-1}^{a_{i-1}} x_{i+1}^{a_{i+1}} \cdots x_n^{a_n}$. In other words, we may assume that $a_i = 0$.

By symmetry, we may assume that $a_1 \ge b_1$. Since $x^\a y^\b \notin I(H_\w)$, we must have $b_1 < \w(x_1y_1) \le \min (\w(y_1y_2), \w(y_1y_5))$. By Claim A, we may assume that $b_1 = 0$. There are two cases as follows.

\vspace{1mm}
\noindent \textbf{Case 1.} $a_1 \ge \w(x_1y_1)$. Then

\begin{equation}
    I_{\a,\b} = \sqrt{I : x^\a y^\b} = (y_1) + \sqrt{I(H_1,\w_1):x^\a} + \sqrt{I(H_2,\w_2):y^\b}
\end{equation}
Since $(H_1,\w_1)$ is Cohen-Macaulay by \cite[Theorem 4.4]{PS}, $I_{\a,\b}$ is not Cohen-Macaulay if and only if $(y_1) + \sqrt{I(H_2,\w_2):y^\b} = (y_1,y_2) + I(H_2)$ or $I(H_2) + (y_1,y_5)$. Assume by contradiction that $(y_1) + \sqrt {I(H_2,\w_2):y^\b} = (y_1,y_2) + I(H_2)$. Then we must have $b_3 \ge \w(y_2y_3) \ge \w(y_3y_4)$. But then $y_4 \in \sqrt{I(H_2,\w_2):y^\b}$, a contradiction. Hence, $I_{\a,\b}$ is Cohen-Macaulay.

\vspace{1mm}
\noindent \textbf{Case 2.} $a_1 < \w(x_1y_1)$. By Claim A, we may assume that $a_1 = 0$. In particular, 
\begin{equation}
    I_{\a,\b} = \sqrt{I(H_1,\w_1):x^\a} + \sqrt{I(H_2,\w_2):y^\b} + (x_1y_1).
\end{equation}
If $x_1$ or $y_1$ appears in $I_{\a,\b}$, with an argument similar to Case 1, we deduce that $I_{\a,\b}$ is the sum of two Cohen-Macaulay ideals on different sets of variables and an ideal generated by some other variables. Hence, it is Cohen-Macaulay. Thus, we may assume that $x_1,y_1$ does not appear in $I_{\a,\b}$. By Lemma \ref{radical_colon}, $\sqrt{I(H_1,\w_1):x^\a} = I(H_1) + (x_i\mid i \in W_1)$, where $W_1\subseteq \{1,\ldots, 5\}$. By the following facts, $W_1$ must belong to $P = \{ \{2,4\}, \{2,3,4\}, \{3,5\},\{3,4,5\}, \{3\}, \{4\}, \emptyset\}$.
\begin{enumerate}
    \item By assumption, $1 \notin W_1$.
    \item Since the balancing vertex of $(H_1,\w_1)$ can be chosen in the set $\{3,4,1\}$, $\w(x_2x_3) \ge \w(x_3x_4)$. Hence, if $2 \in W_1$ then $4 \in W_1$. Similarly, if $5 \in W_1$ then $3 \in W_1$.
    \item By \cite[Theorem 4.4]{PS} and Lemma \ref{lem_radical_transfer}, $I(H_1) + (x_i\mid i\in W_1)$ is a Cohen-Macaulay ideal, $W_1$ cannot be $\{3,4\}$. 
    \item $W_1$ cannot contain $\{2,5\}$.
\end{enumerate}
Proof of (4). Assume by contradiction that $2,5 \in W_1$. Since $a_1 = 0$, we must have $a_3 \ge \w(x_2x_3)$. Similarly, $a_4 \ge \w(x_4x_5)$. Since the balancing vertex of $(H_1,\w_1)$ can be chosen in the set $\{3,4,1\}$, we have $\w(x_3x_4) \le \min (\w(x_2x_3),\w(x_4x_5))$. Hence, $\w(x_3x_4) \le a_3,a_4$. But this implies that $x^\a \in I$, a contradiction.

Now, it is easy to check that if $W_1, W_2$ belong to $P$, we have $I(H) + (x_i \mid i \in W_1) + (y_j \mid j \in W_2)$ is Cohen-Macaulay. The Proposition follows.
\end{proof}

Finally, by Theorem \ref{thm_sCM}, any Cohen-Macaulay very well-covered graph that is not Woodroofe is a counterexample to Terai's conjecture. We provide some concrete examples below. Recall that a simple graph is called very well-covered if the size of every minimal vertex cover is half the number of vertices. In particular, it is unmixed. 

\begin{exm}\label{counterexm-Terai's} 
Let $H$ be a suspension of a cycle $C_t$ for $t\ne 3,5$; i.e., the set of edges and the set of vertices are
$$E(H)=\{x_1x_2, x_2x_3,\ldots,x_{t-1}x_t,x_tx_1, x_1y_1,\ldots,x_ty_t\} \text{ and } V(H)=\{x_1,y_1,\ldots,x_t,y_t\}.$$
Let $\w$ be a weight function on $E(H)$ taking value $w \ge 2$ for the edges $x_{i}x_{i+1}$ and value $1$ otherwise. Then, $H$ is a Cohen-Macaulay very well-covered graph, but $(H,\w)$ is not sequentially Cohen-Macaulay.
\end{exm}
\begin{proof} The graph $H$ is Cohen-Macaulay by  \cite[Theorem 2.1]{SVV} (also see \cite{V}). By definition, $H$ is very well-covered. Since
	$$\sqrt{I(H_{\w}):\prod_{i=1}^t x_i^{w-1}}=I(C_t)+(y_1,\ldots,y_t)$$
 and $I(C_t)$ is not sequentially Cohen-Macaulay by \cite[Proposition 4.1]{FT}. By Lemma \ref{lem_radical_transfer}, $I(H_\w)$ is not sequentially Cohen-Macaulay.
\end{proof}

\subsection*{Acknowledgments} This paper was done while the first author was visiting the Vietnam Institute for Advanced Study in Mathematics (VIASM). He would like to thank the VIASM for the hospitality and financial support, and he also thanks the Vietnam National Foundation for Science and Technology Development (NAFOSTED) for its support under grant number 101.04-2021.19.

\subsection*{Conflict of interest}

The authors declare no potential conflict of interests.

\end{document}